\newtheorem{theo}{Theorem}
\newtheorem{remark}{Remark}
\newcommand\egaldef{\stackrel{\mbox{\upshape\tiny def}}{=}}
\DeclareMathOperator{\PP}{I\!P}
\DeclareMathOperator{\EE}{I\!E}
\newcommand{\NN}{{\mathrm{I\! N}}}
\newcommand{\1}{\leavevmode\hbox{\textrm{\small1\kern-0.35em\normalsize1}}}
\newcommand{\ind}[1]{\1_{\{#1\}}}
\newcommand{\N}{^{\scriptscriptstyle (N)}}
\newcommand{\Rfair}{^{\text{\upshape mm}}}
\newcommand{\Rmin}{^{\text{\upshape min}}}
\newcommand{\inb}{^{\mbox{\tiny in}}}
\newcommand{\outb}{^{\mbox{\tiny out}}}
\newcommand{\EX}{{\bar \alpha}}
\newcommand{\R}{{\mathcal{R}}}
\renewcommand{\L}{{\mathcal{L}}}
\begin{document}

\title{Best-effort networks: modeling and performance analysis via
       large networks asymptotics}
\author{Guy Fayolle,  Arnaud de La Fortelle, Jean-Marc Lasgouttes,
        Laurent Massouli\'e, James Roberts
\thanks{This work has been partly supported by a grant from
        France T\'el\'ecom R\&D.}
\thanks{G. Fayolle, A. de La Fortelle and J.-M. Lasgouttes are with
       INRIA, Domaine de Voluceau BP 105, Rocquencourt 78153 Le
       Chesnay CEDEX, France.}
\thanks{L. Massouli\'e is with Microsoft Research, Saint
       George House, 1 Guildhall Street, CB2 3NH Cambridge,
       United Kingdom.}
\thanks{J. Roberts is with France T\'el\'ecom R\&D, 38--40, rue du 
       G\'en\'eral Leclerc, 92794 Issy les Moulineaux CEDEX 9,
       France.}
}

\markboth{IEEE Infocom 2001}
         {Fayolle \emph{et al}: Best-effort networks: modeling and
          performance analysis via large networks asymptotics}

\maketitle

\begin{abstract}
In this paper we introduce a class of Markov models, termed
best-effort networks, designed to capture performance indices such as
mean transfer times in data networks with best-effort service. We
introduce the so-called min bandwidth sharing policy as a conservative
approximation to the classical max-min policy. We establish necessary
and sufficient ergodicity conditions for best-effort networks under
the min policy. We then resort to the mean field technique of
statistical physics to analyze network performance deriving fixed
point equations for the stationary distribution of large symmetrical
best-effort networks. A specific instance of such networks is the
star-shaped network which constitutes a plausible model of a network
with an overprovisioned backbone. Numerical and analytical study of
the equations allows us to state a number of qualitative conclusions
on the impact of traffic parameters (link loads) and topology
parameters (route lengths) on mean document transfer time.
\end{abstract}

\begin{keywords}
best-effort service, max-min fairness, min policy, mean field,
star-shaped network. 
\end{keywords}

\section{Introduction}

Consider a network handling data flows from several users, and
assume no quality of service commitments (such as minimum bandwidth
allocations) have been made by the network to the users. Such a
situation has been prevalent in the Internet until now, and is likely
to remain so for another few years.

The preferred service model in that situation, known as best effort
service, consists in allocating a fair proportion of bandwidth to
contending users; see, e.g., Bertsekas and Gallager~\cite{BerGal:1}.
There are actually several possible notions of fairness available for
this bandwidth allocation problem (see, e.g., Mo and
Walrand~\cite{mowal} for a parametric family of fairness criteria
covering all other notions proposed so far), although the classical
notion proposed in~\cite{BerGal:1} is the so-called max-min fairness.

Recent work has led to a relatively good understanding of how bandwidth is shared between network users when a given congestion control algorithm
is used; see, e.g., Massouli\'e and Roberts~\cite{mass} and references
therein. The question of what type of fairness is achieved in the
current Internet, where Jacobson's congestion avoidance algorithm---as
implemented in TCP---is responsible for congestion control, has been
studied in depth by Hurley et al.~\cite{hurl}. These studies all assume 
the number of flows remains fixed.
 
In comparison, there is little work accounting for the random nature
of traffic and its impact on user perceived quality of service.
Consider for instance the transfer of digital documents (Web pages,
files, emails,\ldots) using a transport protocol like TCP\@. This
constitutes the bulk of Internet traffic today. The performance
criterion relevant to such transfers is the overall document transfer
time. This time is clearly highly dependent on the number of ongoing
transfers on the links shared by the considered connection. This
number varies as a random process as new connections are established
and existing ones terminate in a way which depends on how bandwidth is
allocated, as well as on the underlying traffic parameters.

In the case of a single bottleneck resource shared perfectly fairly,
simple traffic assumptions of Poisson arrivals and identically and
independently distributed document size lead to a processor sharing
queueing model~\cite{mass2}. This fluid flow model provides useful
results on expected response times as a function of the load of an
access link or a Web server, for instance. It also shows how a form of
congestion collapse can occur when demand (arrival rate $\times$ mean
document size) exceeds capacity. The processor sharing queue is then
no longer ergodic leading to unbounded response times. To understand
the impact of multiple bottlenecks and to investigate the effect of
different sharing strategies, one would like to dispose of similar
analytical results for multiple resource systems.

To the best of our knowledge, the only analytical results available so
far are in Massouli\'e and Roberts~\cite{mass2}, where the so-called
linear network topology is investigated. Simulation results for the
linear network can be found both in~\cite{mass2} and in de Veciana et
al.~\cite{konst}. The main motivation for the present paper is to
study the performance of best-effort networks with alternative
topologies, the ultimate objective being the derivation of heuristics 
enabling the performance evaluation of bandwidth sharing in a general 
network. 

In the present paper we report the results of our preliminary 
investigations. These include an analysis of the stability conditions 
under which the expected response time remains finite in a general 
network. We also apply mean field techniques to evaluate the performance 
of large symmetrical networks. Numerical results derived from the model 
illustrate how response times depend on the number of bottleneck links 
and their utilization. These results are of some practical interest and 
aide our understanding of the behavior of best effort networks. A 
further significant contribution is the insight provided into the 
inherent difficulty of deriving performance estimates when more than one 
bottleneck limits throughput.  

Section~\ref{sec.best} introduces a general class of Markov models for
best-effort networks which is intended to capture the impact of
network topology, traffic parameters and bandwidth sharing (fairness)
criteria on document transfer times. A brief account of the results
obtained in~\cite{mass2} is given, and the so-called ``min'' bandwidth
allocation is introduced as a conservative approximation to max-min
fairness. Section~\ref{sec.ergo} then establishes the necessary and
sufficient ergodicity criteria for best-effort networks under the
``min'' policy. Section~\ref{sec.asympt} introduces the so-called
``star topology''. Its relevance as a model of real networks is
discussed and a mean field heuristic is proposed. This heuristic is
expected to be accurate in the asymptotic regime where the number of
star branches is large. The derived fixed point equations are
investigated numerically in Section~\ref{sec.numer}. Simulation is
used to verify the accuracy of the heuristics. Extensions to the
star-shaped network are also considered in Sections~\ref{sec.asympt}
and~\ref{sec.numer}, which notably allow an evaluation of the impact
of the number of bottlenecks on the mean transfer time.
\section{Best-effort networks}\label{sec.best}

Consider the following network model: a set $\L$ of links is given,
where each link $\ell\in\L$ has an associated capacity or bandwidth
$C_\ell>0$. A set $\R$ of routes is given, each route being identified
with a subset of links. Fig.~\ref{fig.linear} illustrates the so-called
linear network: it consists of $L$ links with equal capacity, route $0$
which crosses each link, and routes $r=1,\ldots,L$ which cross a
single link. 

To each route $r$ are associated two parameters: $\lambda_r$ is the
arrival rate of new transfer requests along route $r$, and $\sigma_r$
is the average document size. We make the following standard
simplifying assumptions: requests for document transfers along route
$r$ arrive at the instants of a Poisson process with intensity
$\lambda_r$, while the corresponding document sizes are mutually
independent, independent of the arrival times, and drawn from an
exponential distribution of mean $\sigma_r$.

These traffic assumptions make the process specifying the number of
transfers in progress on different routes Markovian (see below) and
thus greatly simplify analysis. The Poisson arrivals assumption is not
unreasonable in a large network. In view of the insensitivity of
performance results for an isolated link to the exact document size
distribution, we do not expect divergence of the real distribution
from the exponential size assumption to invalidate the derived
conclusions. However, the main reason for assuming an exponential
distribution is clearly one of analytical tractability.


\begin{figure}
\begin{center}
\includegraphics[width=\columnwidth]{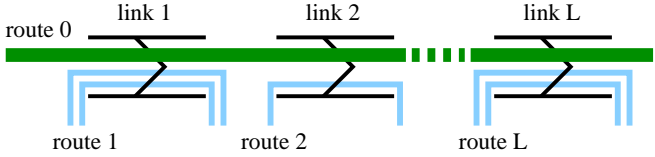}
\end{center}
\caption{The linear network}\label{fig.linear}
\end{figure}
The network state is summarized by the variables $X\egaldef\{x_r,
r\in\R\}$, where $x_r$ denotes the number of transfers in progress
along route $r$. It remains to specify at what speed documents are
transmitted in any given state $X$ in order to turn $X$ into a Markov
process with well defined dynamics. Indeed, given the rate
$\zeta_r(X)$ at which documents along route $r$ are transferred when
the network state is $X$, $X$ is a Markov process with non-zero
transition rates given by
$$
\left\{
\begin{array}{ll}
x_r\rightarrow x_r+1:&\mbox{ rate }\lambda_r,\\
x_r\rightarrow x_r-1:&\mbox{ rate }x_r\zeta_r(X)/\sigma_r.
\end{array}
\right.
$$
A natural assumption would be to consider that each document
receives its fair share of bandwidth. For instance, if as
in~\cite{BerGal:1} fairness is understood as max-min fairness, each
transfer along route $r$ receives a bandwidth share $\zeta\Rfair_r$, where
\begin{equation}\label{eq.capa}
\sum_{r\ni \ell} x_r \zeta\Rfair_r\leq C_\ell,\ \forall\ell\in\L,
\end{equation}
and for every route $r$, there is at least one link $\ell\in r$ such that
\begin{equation}\label{eq.maxmin}
 \sum_{r'\ni \ell}x_{r'}\zeta\Rfair_{r'}=C_\ell, 
 \mbox{ and }\zeta\Rfair_r=\max_{r'\ni \ell} \zeta\Rfair_{r'}.
\end{equation}
These two conditions uniquely determine the bandwidth shares $\zeta\Rfair_r$.
Having specified the Markov process $X$, one can then
attempt to study its steady state properties, identifying  the
conditions on the load parameters 
\[
\rho_\ell\egaldef\frac{1}{C_\ell}\sum_{r\ni\ell}\lambda_r\sigma_r
\] 
under which it is ergodic and, when it is, determining the stationary
distribution. Mean transfer times $T_r$ along each route $r$ can then
be computed using Little's law: $T_r=\EE x_r/\lambda_r$.

It turns out that explicit formulas for steady state distributions are
typically beyond reach. A notable exception is the linear network,
with bandwidth shares being allocated to realize proportional rather
than max-min fairness; see~\cite{mass2}. In order to obtain formulas
in other cases, one therefore has to resort to asymptotics on various
parameters. For instance, for the linear network with max-min fair
rate sharing, the regime where the arrival rate $\lambda_0$ along
route $0$ goes to zero (essentially, a form of light traffic analysis)
is considered in~\cite{mass2}; this leads to approximate formulas for
$T_0$. It can be shown, in particular, that $T_0$ increases as the
logarithm of the number of links $L$ when $L$ increases. This is in
contrast to the case of proportionally fair sharing where it increases
linearly in $L$.

The main purpose of this paper is to investigate an alternative
asymptotic regime where it is the network topology which evolves. The
precise description of this limiting regime will be given in
Section~\ref{sec.asympt}.

In the following sections, we consider
bandwidth allocations according to the following ``min'' policy: given
the network state $X$, each transfer along route $r$ receives a
bandwidth share $\zeta\Rmin_r$ given by
\begin{equation}
\zeta\Rmin_r\egaldef\min_{\ell\in r}\frac{C_\ell}{X_\ell},
\end{equation}
where we have introduced the notation $X_\ell=\sum_{r'\ni \ell}x_{r'}$ to
represent the total number of transfers making use of link~$\ell$.

It is easy to check that this policy satisfies the capacity
constraints~(\ref{eq.capa}). Moreover, it is sub-optimal with respect to
the max-min fairness policy, as shown in the next theorem.

\begin{theo}\label{theo.order}
Under the same initial conditions, the vector $X\Rfair(t)$ for the
system under the $\zeta\Rfair$ allocation policy is stochastically
smaller than $X\Rmin(t)$, corresponding to the $\zeta\Rmin$ allocation.
\end{theo}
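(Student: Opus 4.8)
The plan is to reduce the stochastic comparison to an almost-sure pathwise comparison produced by a coupling, the essential input being a deterministic, state-by-state domination of the allocated rates. First I would establish that $\zeta\Rmin_r \le \zeta\Rfair_r$ in \emph{every} state $X$ and for \emph{every} route $r$. Fix $r$ and let $\ell\in r$ be a bottleneck link realizing~(\ref{eq.maxmin}), so that $\sum_{r'\ni\ell} x_{r'}\zeta\Rfair_{r'}=C_\ell$ with $\zeta\Rfair_r=\max_{r'\ni\ell}\zeta\Rfair_{r'}$; bounding each term of the saturated sum by this maximum gives $C_\ell\le\zeta\Rfair_r\sum_{r'\ni\ell}x_{r'}=\zeta\Rfair_r X_\ell$, hence $\zeta\Rfair_r\ge C_\ell/X_\ell\ge\min_{\ell'\in r}C_{\ell'}/X_{\ell'}=\zeta\Rmin_r$. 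I would also record the elementary monotonicity of the min policy: if $X\le\tilde X$ coordinatewise then $X_\ell\le\tilde X_\ell$ for every $\ell$, and therefore $\zeta\Rmin_r(\tilde X)\le\zeta\Rmin_r(X)$.

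Next I would couple the two processes on a single probability space, writing $Y=X\Rfair$ and $Z=X\Rmin$ with $Y(0)=Z(0)$, and prove that $Y(t)\le Z(t)$ coordinatewise for all $t$ almost surely; Strassen's theorem then delivers the claimed stochastic ordering. Arrivals are driven by common Poisson clocks, so each arrival on route $r$ increments both $Y_r$ and $Z_r$ and cannot break the order. For route-$r$ departures, with marginal rates $a=Y_r\zeta\Rfair_r(Y)/\sigma_r$ and $b=Z_r\zeta\Rmin_r(Z)/\sigma_r$, I would use the usual splitting into a simultaneous departure in both chains at rate $\min(a,b)$, a $Y$-only departure at rate $(a-b)^+$, and a $Z$-only departure at rate $(b-a)^+$. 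This reproduces the correct marginal dynamics, and the only transition capable of violating $Y_r\le Z_r$ is a $Z$-only departure occurring while $Y_r=Z_r$.

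The crux is to verify that this dangerous rate vanishes on the boundary. When $Y_r=Z_r$ and $Y\le Z$, chaining the two facts of the first paragraph yields $\zeta\Rmin_r(Z)\le\zeta\Rmin_r(Y)\le\zeta\Rfair_r(Y)$, so that $b=Z_r\zeta\Rmin_r(Z)/\sigma_r\le Y_r\zeta\Rfair_r(Y)/\sigma_r=a$ and $(b-a)^+=0$. Hence no $Z$-only departure ever fires on the boundary; $Y$-only departures only enlarge the gap, and simultaneous departures and arrivals preserve it, so $Y(t)\le Z(t)$ holds for all $t$. I expect the main obstacle to be exactly this boundary step: since the two chains sit in different states, their per-route rates cannot be compared directly, and it is the combination of the pointwise domination with the monotonicity of the min allocation that closes the gap --- crucially, it is the monotonicity of the \emph{min} policy, not of the far less tractable max-min allocation, that the argument requires.
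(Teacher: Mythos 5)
Your proposal is correct and takes essentially the same approach as the paper: the same bottleneck-link computation gives the rate domination, followed by the same coupling; the paper simply chains the two facts you state separately (same-state domination $\zeta\Rmin_r\le\zeta\Rfair_r$ plus monotonicity of the min allocation) into the single cross-state estimate $\zeta\Rfair_r(X\Rfair)\ge C_\ell/X\Rfair_\ell\ge C_\ell/X\Rmin_\ell\ge\zeta\Rmin_r(X\Rmin)$. Your explicit coupling construction (common arrival clocks, split departure rates, verification that the $Z$-only departure rate vanishes on the boundary $Y_r=Z_r$) merely fills in the details that the paper compresses into the single sentence ``using a coupling argument, one can define the processes $X\Rfair$ and $X\Rmin$ in such a way that $X\Rfair(t)\le X\Rmin(t)$ for all $t>0$.''
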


\begin{proof}
Assume that, for some $t$, $X\Rfair(t)\leq X\Rmin(t)$. Then, with the
notation of~(\ref{eq.maxmin}),
\begin{eqnarray*}
\zeta\Rfair_r 
   &=& \max_{r'\ni \ell} \zeta\Rfair_{r'}\\
   &\geq& \frac{1}{X\Rfair_{\ell}(t)}
                  \sum_{r'\ni \ell} x\Rfair_{r'}(t)\zeta\Rfair_{r'}
        \,=\, \frac{C_\ell}{X\Rfair_\ell(t)}\\
   &\geq& \frac{C_\ell}{X\Rmin_\ell(t)}
        \,\geq\, \zeta\Rmin_r.
\end{eqnarray*}

Thus, using a coupling argument, one can define the processes
$X\Rfair$ and $X\Rmin(t)$ in such a way that $X\Rfair(t)\leq
X\Rmin(t)$ for all $t>0$.
\end{proof}

The previous theorem motivates the study of the min policy, as it
implies for instance that mean transfer times $T_r$ under the min
policy provide upper bounds on the corresponding transfer times under
the max-min policy.

\newcommand{\D}{\mathop{\Delta}}
\newcommand{\hx}{\hat x}
\newcommand{\HX}{\hat X}

\section{Ergodicity conditions}\label{sec.ergo}

In the following we demonstrate that min and max-min bandwidth sharing
policies have a stationary regime under the usual conditions, i.e.\
when the load on each link $\ell$ is less than~1:

\begin{theo}
Under the allocation policies $\zeta\Rfair$ and $\zeta\Rmin$, the
network is
\begin{enumerate}
\item[(\emph{i})] ergodic if $\max_{\ell\in\L} \rho_\ell < 1$;
\item[(\emph{ii})] transient if $\max_{\ell\in\L}\rho_\ell > 1$.
\end{enumerate}
\end{theo}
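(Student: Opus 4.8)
The plan is to treat the two assertions separately, after two preliminary reductions. First, the chain is irreducible (from any state the empty state $\mathbf 0$ is reachable by letting all transfers complete, and every state is reachable from $\mathbf 0$ by arrivals), so ergodicity is equivalent to positive recurrence. Second, by the coupling of Theorem~\ref{theo.order} one has $X\Rfair(t)\le X\Rmin(t)$ pathwise, so the $\zeta\Rmin$ process dominates: it therefore suffices to prove \emph{(i)} for the min policy, the max-min case following by domination, since a chain whose marginals are dominated by those of a positive-recurrent one inherits tightness and hence, by irreducibility, positive recurrence. The transience claim \emph{(ii)} I would instead establish for both policies at once, since it rests only on the capacity constraint~\eqref{eq.capa}, which holds for $\zeta\Rfair$ and $\zeta\Rmin$ alike.

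For \emph{(ii)}, pick a link $\ell_0$ with $\rho_{\ell_0}>1$ and consider the linear test function $\Phi(X)\egaldef\sum_{r\ni\ell_0}\sigma_r x_r$, the size-weighted backlog of the routes crossing $\ell_0$. Writing $\mathcal{A}$ for the generator, each arrival on a route $r\ni\ell_0$ raises $\Phi$ by $\sigma_r$ at rate $\lambda_r$ and each departure lowers it by $\sigma_r$ at rate $x_r\zeta_r/\sigma_r$, so, using $\sum_{r\ni\ell_0}\lambda_r\sigma_r=\rho_{\ell_0}C_{\ell_0}$ and the capacity bound $\sum_{r\ni\ell_0}x_r\zeta_r\le C_{\ell_0}$ of~\eqref{eq.capa},
\begin{equation*}
\mathcal{A}\Phi(X)=\sum_{r\ni\ell_0}\lambda_r\sigma_r-\sum_{r\ni\ell_0}x_r\zeta_r\ge(\rho_{\ell_0}-1)C_{\ell_0}>0 .
\end{equation*}
Since the total transition rate and the jump sizes of $\Phi$ are bounded, the compensated process $\Phi(X(t))-\int_0^t\mathcal{A}\Phi\,ds$ is a martingale with at most linearly growing variance, whence $\Phi(X(t))/t\to(\rho_{\ell_0}-1)C_{\ell_0}>0$ and $\Phi(X(t))\to\infty$ almost surely. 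The empty state is then visited only finitely often, so the chain is transient. This is the easy half, and it needs no coupling.

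For \emph{(i)} under the min policy I would run a Foster--Lyapunov argument whose engine is the following saturation fact. Let $\ell^*$ maximise $X_\ell/C_\ell$; then for every route $r\ni\ell^*$ the minimum defining $\zeta\Rmin_r$ is attained at $\ell^*$, so $\zeta\Rmin_r=C_{\ell^*}/X_{\ell^*}$ and $\sum_{r\ni\ell^*}x_r\zeta\Rmin_r=C_{\ell^*}$: the most congested link is fully utilised. Consequently its associated work $Y_\ell\egaldef\sum_{r\ni\ell}\sigma_r x_r$ has drift
\begin{equation*}
\mathcal{A}Y_{\ell^*}=\rho_{\ell^*}C_{\ell^*}-\sum_{r\ni\ell^*}x_r\zeta\Rmin_r=(\rho_{\ell^*}-1)C_{\ell^*}\le(\max_\ell\rho_\ell-1)C_{\ell^*}<0 .
\end{equation*}
The plan is to promote this local fact into global negative drift through a Lyapunov function of the works, such as a max-type functional $V(X)=\max_\ell Y_\ell/C_\ell$ (or a smooth, respectively sorted, surrogate), and then to conclude positive recurrence from the standard criteria (Foster's criterion on the uniformised chain, or the associated-dynamical-system / fluid-limit route).

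The hard part is precisely this last step, because of a structural mismatch: the saturation fact pins down the link of maximal \emph{flow-count} congestion $X_\ell/C_\ell$, whereas the quantity carrying a clean, size-weighted negative drift is the \emph{work} $Y_\ell$. When the document sizes $\sigma_r$ differ across routes these two orderings need not coincide, so the link bearing the most work may be under-utilised (its routes bottlenecked at more congested links) and need not have negative drift. Reconciling the two --- assembling a single potential that decreases despite such high-work but under-utilised links and despite the moving arg-max --- is the crux. I would handle it either by a layered potential that peels links off in decreasing order of $X_\ell/C_\ell$, charging each link's work to the saturated bottleneck that drains it, or, more robustly, by showing the fluid model empties in finite time and invoking Dai's criterion. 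The equal-size case $\sigma_r\equiv\sigma$ already follows directly from $V=\max_\ell X_\ell/C_\ell$, whose drift at $\ell^*$ equals $(\rho_{\ell^*}-1)/\sigma<0$, making the mechanism transparent and isolating the heterogeneity of $\sigma_r$ as the sole genuine obstacle.
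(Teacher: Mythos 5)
Your reductions and your part \emph{(ii)} match the paper: transience is obtained there with the same linear test function $g(\HX)\egaldef\sum_{r\ni\ell_0}\sigma_r\hx_r$, whose drift is bounded below by $(\rho_{\ell_0}-1)C_{\ell_0}/D'>0$ using only the capacity constraint, and the max-min case of \emph{(i)} is likewise deduced from the min case via Theorem~\ref{theo.order}. The genuine gap is part \emph{(i)} for the min policy itself: your ``saturation fact'' (the link maximizing $X_\ell/C_\ell$ is fully utilized) is correct, but you stop exactly where the proof has to start, namely at the construction of a global Lyapunov function, and neither the ``layered potential'' nor the fluid-limit route is carried out. Worse, even your equal-size fallback is not correct as stated: if two links achieve the maximum of $X_\ell/C_\ell$ and no route traverses both, then a single transition can lower the count of at most one of them, so $\max_\ell X_\ell/C_\ell$ cannot decrease in one jump, while any arrival on a route through either link increases it with positive probability --- the plain max has \emph{nonnegative} drift at such ties, so it does not ``follow directly''; the smoothing you parenthetically allow is not optional, and for heterogeneous $\sigma_r$ you exhibit no candidate function at all.

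The paper closes precisely this gap by working in route coordinates rather than link coordinates, which dissolves the count-versus-work mismatch you identify as the crux. Setting $\hx^*_r\egaldef\hx_r/(\lambda_r\sigma_r)$ and $\hx^*_M\egaldef\max_{r\in\R}\hx^*_r$, the elementary bound $\HX_\ell=\sum_{r\ni\ell}\hx_r\leq\rho_\ell C_\ell\max_{r\ni\ell}\hx^*_r$ shows that under the min policy \emph{every} route is served, in the jump chain, with probability at least $\frac{\lambda_r}{D\rho_M}\frac{\hx^*_r}{\hx^*_M}$; hence every route with $\hx^*_r>\rho_M\hx^*_M$ has negative drift --- a route-level saturation fact in which the normalization by $\lambda_r\sigma_r$ absorbs the heterogeneity of document sizes. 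The global potential is then an exponential soft-max, $f(\HX)=\sum_{r\in\R}\sum_{1\leq k\leq\hx_r}\gamma_r^k$ with $\gamma_r=\gamma^{1/(\lambda_r\sigma_r)}$, so that $\gamma_r^{\hx_r}=\gamma^{\hx^*_r}$: routes with $\hx^*_r>\alpha\hx^*_M$ contribute terms bounded by $(\theta-\alpha)\min_r\lambda_r\,\gamma^{\hx^*_M}/(\rho_M D)<0$, while all remaining routes contribute at most $|\R|\theta\max_r\lambda_r\,\gamma^{\alpha\hx^*_M}/(\rho_M D)$, smaller by the factor $\gamma^{(\alpha-1)\hx^*_M}$; outside the compact set $\{\hx^*_M\leq C\}$ the total drift is uniformly negative and Foster's theorem concludes. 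This exponential weighting in normalized coordinates is exactly the combination of smoothing (curing your moving-argmax/tie problem) and rescaling (curing the $\sigma_r$ heterogeneity) that your sketch is missing; adapting it is the natural way to complete your link-based program, but as written your proposal proves only the easy half of the theorem.
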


This result has already been proven for the max-min policy in
\cite{konst}; we note that, by Theorem~\ref{theo.order}, ergodicity
under the min policy implies ergodicity under the max-min policy, thus
the treatment of the min policy given below provides an alternative
proof to that of~\cite{konst}. However we feel that, since the proof
below is simpler and uses only elementary Lyapunov functions results,
it should be easier to adapt to a more complicated situation.
Transience under condition~(\emph{ii}) is in fact valid for any
allocation policy which meets the capacity
constraints~(\ref{eq.capa}).

\begin{proof}
Consider the discrete time chain $(\HX(n), n\in\NN)$ describing the
sequence of states visited by the continuous time jump process $X$.
Transitions from a given state $\HX=\big(\hx_r, r\in \R\big)$ satisfy
\begin{eqnarray*}
\PP\bigl[\D \hx_r(n)=1 \bigm| \HX(n)=\HX\bigr] 
  &=&\frac{\lambda_r}{D}\,,\\[0.3pt] 
\PP\bigl[\D \hx_r(n)=-1 \bigm| \HX(n)=\HX\bigr] 
  &=& \frac{1}{D}\,\frac{\hx_r}{\sigma_r}\min_{\ell\in r} \frac{C_{\ell}}{\HX_\ell} ,
\end{eqnarray*}
where $\D \hx_r(n)\egaldef \hx_r(n+1)-\hx_r(n)$ and
\begin{eqnarray*}
D &\egaldef&\sum_{r\in \R}\Bigl(\lambda_r
               +\frac{\hx_r}{\sigma_r}\min_{\ell\in r} \frac{C_{\ell}}{\HX_{\ell}}\Bigr)\\
  &\leq& |\R|\cdot\Bigl(\max_{r\in \R}\lambda_r
                      +\max_{r\in \R}\frac{1}{\sigma_r}
                       \cdot\max_{\ell\in\L}C_\ell\Bigr)\\
  &\egaldef&D'.
\end{eqnarray*}

Ergodicity of the continuous time process $X$ will follow from that of
$\HX$ and from the fact that the mean sojourn times in each state $X$
are bounded from above uniformly in $X$ (or equivalently, that the
jump rates out of each state $X$ are bounded away from zero), a
property which is easily verified.

\medskip
\noindent\emph{Sufficient condition.} 
Assume $\rho_M\egaldef\max_{\ell\in\L} \rho_\ell < 1$, and define the
Lyapunov function
\[
f(\HX) \egaldef \sum_{r\in\R}\sum_{1\leq k\leq \hx_r}\gamma_r^k,
\]
where $\gamma_r>1$ will be chosen later. The structure of the
function, which may seem unnatural, has been chosen for the sake of
computation; it is in fact of the general form
$\sum_{r\in\R}\beta_r\gamma_r^{\hx_r}+K$, for appropriate constants
$\beta_r$ and $K$.

In order to express the transition rates in terms of $\hx_r$ and $\rho_\ell$,
remark that
\[
 \HX_\ell = \sum_{r\ni \ell} \hx_r 
     = \sum_{r\ni \ell} \lambda_r \sigma_r \frac{\hx_r}{\lambda_r \sigma_r}
     \leq \rho_\ell C_\ell \max_{r\ni \ell}\frac{\hx_r}{\lambda_r \sigma_r}.
\]

Then, using the notation
\[
  \hx^*_r\egaldef \frac{\hx_r}{\lambda_r \sigma_r},\quad
  \hx^*_M\egaldef \max_{r\in\R} \hx^*_r,
\]
we have, $\forall r\in\R$,
\[
 \PP\bigl[\D \hx_r(n)=-1 \bigm| \HX(n)=\HX\bigr] 
   \geq \frac{\lambda_r}{D\rho_M}\frac{\hx^*_r}{\hx^*_M}.
\]

Thus,
\begin{eqnarray*}
\lefteqn{\EE\bigl[f(\HX(n+1))-f(\HX(n)) \bigm| \HX(n)=\HX\bigr]}\qquad\\[0.3cm]
&=& \sum_{r\in\R}\Big(\gamma_r^{\hx_r+1}
 \PP\bigl[\D \hx_r(n)=1 \bigm|  \HX(n)=\HX\bigr] \\
& &\hphantom{\sum_{r\in\R}\Big(}- \gamma^{\hx_r}
 \PP\bigl[\D \hx_r(n)=-1 \bigm|  \HX(n)=\HX\bigr]\Big) \\[0.3cm]
&\leq& \sum_{r\in\R}\frac{\lambda_r}{\rho_M D} \gamma_r^{\hx_r}
 \Bigl[\rho_M\gamma_r - \frac{\hx^*_r}{\hx^*_M}\Bigr]
\end{eqnarray*}

Let $\gamma_r\egaldef\gamma^{\frac{1}{\lambda_r \sigma_r}}$, where $\gamma$
is such that
\[
 \rho_M\gamma_r=\rho_M\gamma^{\frac{1}{\lambda_r \sigma_r}}<\theta<1,
 \ r\in\R,
\]
for some real number $\theta$ satisfying $\rho_M<\theta<1$. 
The following inequality sums up what we have so far:
\begin{eqnarray*}
\lefteqn{\EE\bigl[f(\HX(n+1))-f(\HX(n)) \bigm| \HX(n)=\HX\bigr]}\hspace{3cm}\\
  &\leq& \sum_{r\in\R}\frac{\lambda_r\gamma^{\hx^*_r}}{\rho_M D} 
       \Bigl[\theta - \frac{\hx^*_r}{\hx^*_M}\Bigr].
\end{eqnarray*}

Let $\alpha$ be a real number such that $\theta<\alpha<1$. The
following quantities will be evaluated separately
\[
\left\{ \begin{array}{ccl}
\Sigma_1 &\egaldef&
 \displaystyle \sum_{r: \hx^*_r > \alpha \hx^*_M}
       \frac{\lambda_r\gamma^{\hx^*_r}}{\rho_M D} 
       \Bigl[\theta - \frac{\hx^*_r}{\hx^*_M}\Bigr] ,\\[0.3cm]
\Sigma_2 &\egaldef&
 \displaystyle \sum_{r: \hx^*_r \leq \alpha \hx^*_M}
       \frac{\lambda_r\gamma^{\hx^*_r}}{\rho_M D} 
       \Bigl[\theta - \frac{\hx^*_r}{\hx^*_M}\Bigr].
\end{array} \right.
\]

Since $\Sigma_1$ is a sum of negative terms, the following bound holds,
for any $r_0$ is such that $\hx^*_{r_0}=\hx^*_M$,
\[
\Sigma_1 \leq \frac{\lambda_{r_0}\gamma^{\hx^*_M}}{\rho_M D}(\theta - \alpha)
         \,\leq\, \frac{\gamma^{\hx^*_M}}{\rho_M D}(\theta - \alpha)
                  \min_{r\in \R}\lambda_r \,<\,0.
\]

Bounding $\Sigma_2$ is straightforward: 
\[
\Sigma_2 \leq \sum_{r: \hx^*_r \leq \alpha \hx^*_M}
       \frac{\lambda_r\gamma^{\alpha \hx^*_M}}{\rho_M D}\theta 
  \,\leq\, \frac{\gamma^{\hx^*_M}}{\rho_M D}\gamma^{(\alpha-1)\hx^*_M}|\R|\theta 
      \max_{r\in\R}\lambda_r.
\]

Now, if $C>0$ and $\epsilon>0$ are chosen to satisfy the inequality
\[
  (\theta - \alpha) \min_{r\in \R}\lambda_r 
  +\gamma^{(\alpha-1)C}|\R|\theta \max_{r\in\R}\lambda_r
  \,\leq\,-\epsilon,
\]
we have, $\forall \HX\in\{\hx^*_M>C\}$,
\begin{eqnarray*}
\lefteqn{\EE\bigl[f(\HX(n+1))-f(\HX(n))\bigm|\HX(n)=\HX\bigr]}\qquad\qquad\\
 &=& \Sigma_1 + \Sigma_2
 \,\leq\, -\epsilon\frac{\gamma^{\hx^*_M}}{\rho_M D}
  \,\leq\, -\epsilon\frac{\gamma^{C}}{\rho_M D'} \,<\, 0.
\end{eqnarray*}

Since $\{\hx^*_M\leq C\}$ is a compact set, Foster's theorem applies
(see e.g.~\cite{FMM}) and the Markov chain is ergodic.

\medskip
\noindent\emph{Necessary condition.}
Assume now that there exists $\ell_0$ such that $\rho_{\ell_0} > 1$. Defining
\[
  g(\HX) \egaldef \sum_{r\ni \ell_0}\sigma_r \hx_r,
\] 
we immediately have,
\begin{eqnarray*}
\lefteqn{\EE\bigl[g(\HX(n+1)) - g(\HX(n)) \bigm| \HX(n)=\HX\bigr]}\qquad\qquad\\
  &=&\sum_{r\ni \ell_0}\sigma_r\Bigl(\PP\bigl[\D \hx_r(n)=1 \bigm|  \HX(n)=\HX\bigr]\\
  & & \hphantom{\sum_{r\ni \ell_0}\sigma_r\Bigl[}
      - \PP\bigl[\D \hx_r(n)=-1 \bigm|  \HX(n)=\HX\bigr]\Bigr)\\
  &\geq& \frac{1}{D'}[C_{\ell_0}\rho_{\ell_0}-C_{\ell_0}] \,>\, 0.
\end{eqnarray*}

Since the jumps are bounded, the chain is transient.
\end{proof}

\section{Mean field analysis of large networks}\label{sec.asympt}

It does not appear possible to obtain closed form expressions for the
stationary distribution of the best-effort network state under the min
policy. We therefore turn to the study of these stationary
distributions under a limiting regime on network size and topology. A
similar approach has previously been successfully applied to loss
networks (see~\cite{kelly,graham}, and references therein), and to
queueing networks in~\cite{VveDobKar:1,DelFay:2}. It is inspired by
the so-called mean field models of statistical physics.

\begin{figure}
\begin{center}
\includegraphics[width=0.6\columnwidth]{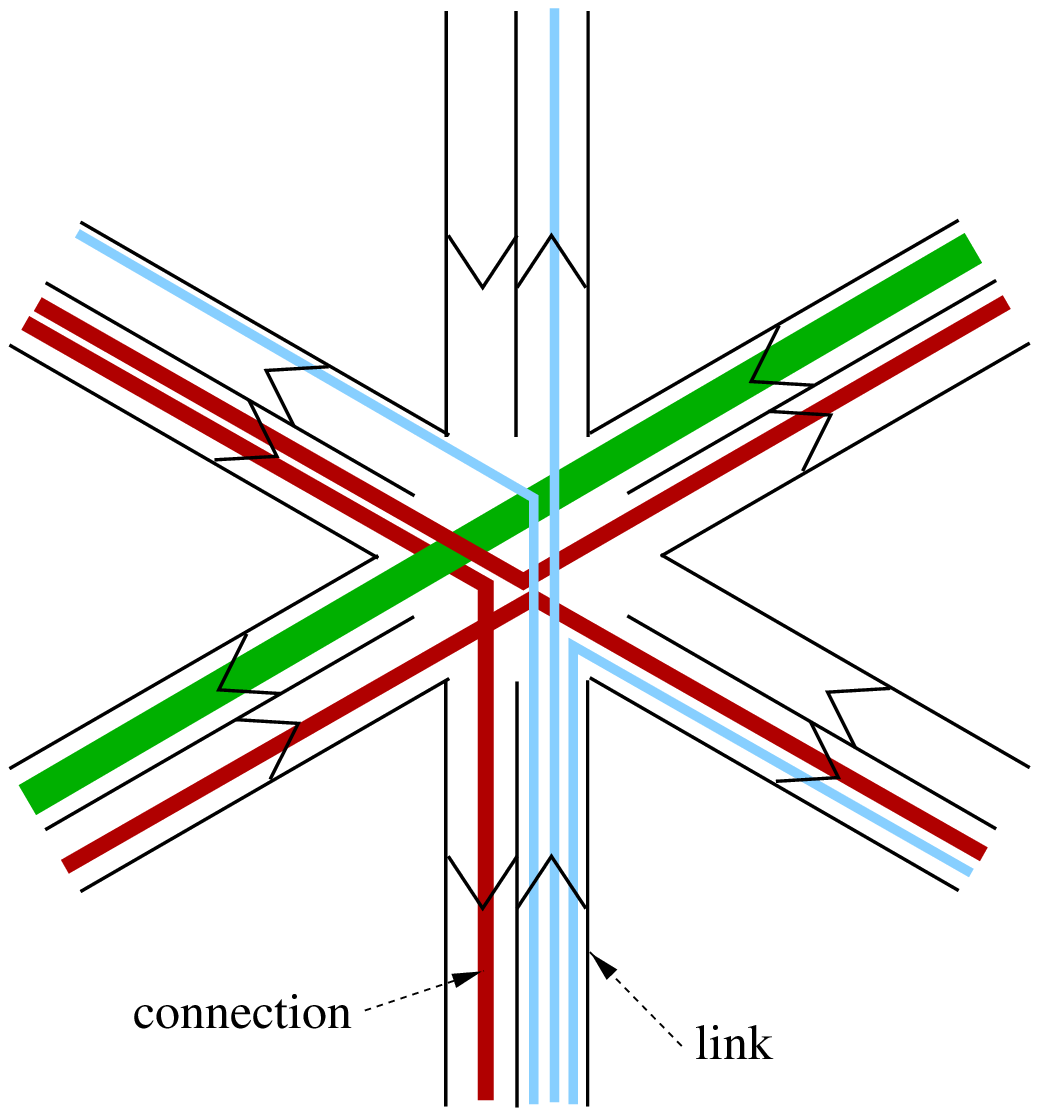}
\end{center}
\caption{A star-shaped network}
\label{fig.etoile}
\end{figure}
Mean field analysis in the present context is best illustrated by the
star-shaped network of Fig.~\ref{fig.etoile}. This network has $N/2$
branches, each consisting of one inbound and one outbound link (thus
implicitly $N$ is an even number), and all links have unit capacity.
Each route $r$ connects the endpoints of two branches via the center
node. It has an associated arrival rate $2\lambda/N$ and mean
message length $\sigma$. The factor $2/N$ is introduced to make
the total load on each link, $\rho=\lambda \sigma$, independent of the
number of links $N$. As discussed below, when $N$ goes to infinity
the number of ongoing transfers on any link becomes independent of the
number of ongoing transfers on any finite collection of the other
links (this was termed the ``chaos propagation'' property
in~\cite{graham}). This allows us to derive fixed point equations for
the probability distribution of the number of transfers in progress on
any link.

\subsection{Symmetrical star-shaped networks}\label{sec.sym}

Although amenability to a mean field analysis is a significant
motivation for considering the star-shaped topology, it should be
noted that it is also relevant to the study of real networks. Any
overprovisioned links in a real network are largely transparent to the
throughput of elastic best effort flows. Only bottleneck resources,
typically located in the access network and within Web servers, need
to be included in the network model. The star-shaped network may thus
be considered to represent any network with a well provisioned
backbone where throughput is limited by bottlenecks at the source and
destination edges. For example, inbound links might represent Web
server CPU, while outbound links correspond to the last hop of an
ISP's interconnection network. This discussion not only motivates the
consideration of such a topology, but also suggests that letting $N$
go to infinity might indeed be realistic if $N$ represents the number
of Web servers over the Internet. Of course, there would be no reason
in practice to assume symmetry. This assumption is introduced solely
for reasons of tractability.

Although our focus is on the star-shaped topology, the mean field
approach can be applied to other symmetrical topologies. It thus
allows one to consider routes with more than two hops. The
corresponding extended model is described in detail in
Section~\ref{sec.symmetrical} below, where the corresponding fixed
point equations are derived. Section~\ref{sec.anal} then presents
analytical results for the star-shaped network. Results of numerical
investigation of the fixed point equations are reported in
Section~\ref{sec.numer}.

\subsection{Fixed point equations for large symmetrical networks}
\label{sec.symmetrical}

We use the following notation in the sequel.
\begin{itemize}
\item $N$: total number of links;
\item $L$: length of a route through the network;
\item $R\N$: number of routes going through a given link;
\item $X\N_\ell$: number of active connections on link
$\ell\in\L$, in stationary state; 
\item $x\N_{r}$: number of active connections on route
$r$, going through links $r(1),\ldots,r(L)$;
\item $\lambda$: arrival rate on a link;
\item $\sigma$: mean message length;
\item $\rho\egaldef \lambda \sigma$: load of a link.
\end{itemize}

We have implicitly assumed here that the number of routes going
through a link, $R\N$, is the same for all links. We shall in fact
assume further that the network topology is the same, as seen from any
route. We do not attempt to give a formal definition of this symmetry
assumption here. The reader is referred to~\cite{graham} for a
thorough discussion on the minimal symmetry assumptions required.
Symmetry implies notably that each route has the same number of hops
and the same traffic parameters. The star-shaped network discussed
above constitutes an example of such a symmetrical network when $L=2$
(with $R\N=N/2$).

\begin{figure}
\begin{center}
\includegraphics[width=0.7\columnwidth]{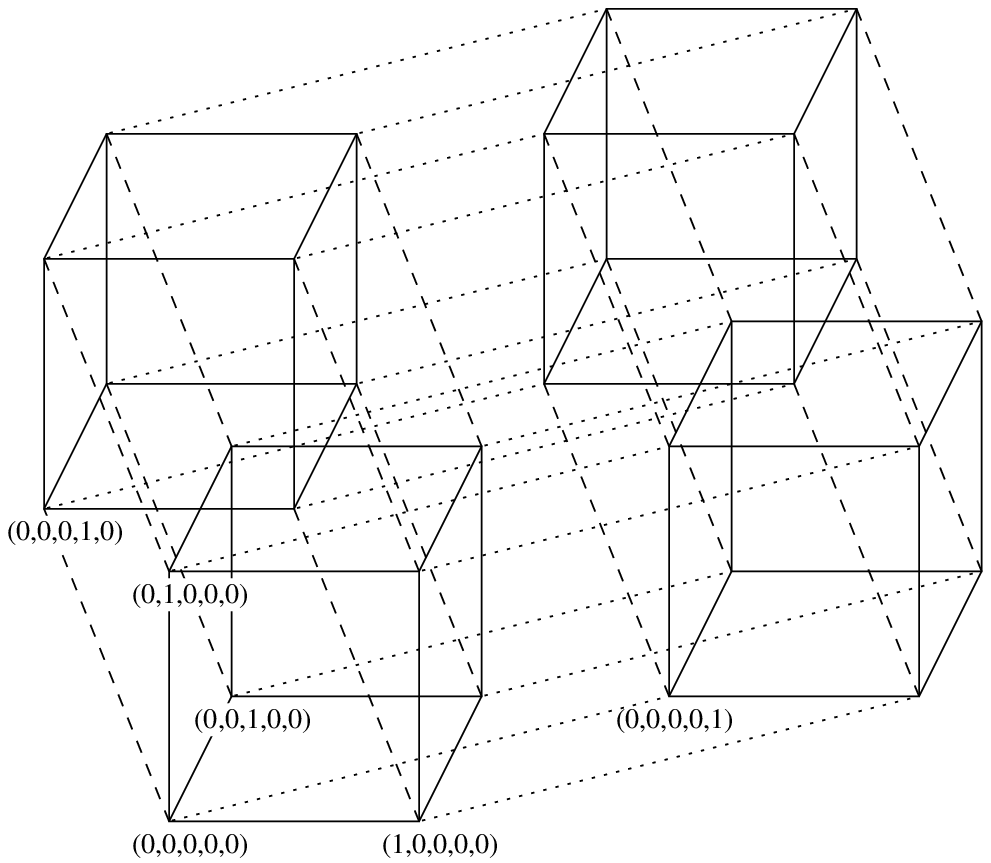}
\end{center}
\caption{A hypercube-shaped network with dimension $5$.}
\label{fig.hypercube}
\end{figure}
It is more difficult to come up with meaningful examples of symmetrical
networks supporting routes with $L>2$: in particular, the network
should not be fully connected, since routes longer than one hop would
then be pointless. One reasonable model is a hypercube
(Fig.~\ref{fig.hypercube}) of large dimension, in which each edge
contains two one-way links.

The hypercube is a classical structure with many symmetries. It is
characterized by its dimension $d$. Its vertices are represented by
$d$-tuples of $0$s and $1$s (e.g.,\ $(0,1,1,0,1)$) and its edges
connect two vertices differing in only one coordinate.

The total number of links in such a network is $N=d2^d$. The number
of routes going through any link is
\[
 R\N=L\frac{(d-1)!}{(d-L)!},
\]
where the only routes considered are the shortest paths between two
vertices which differ in exactly $L$ coordinates. Note that the
results below do not depend on the precise topology of the network.

We now derive the fixed point equations. It should be stressed that
this derivation is heuristic. We clearly mention which steps need
further justification in the course of the derivation. We do believe
that the equations are very good approximations, however, especially
in view of the numerical and simulation results presented in the
following section.


Assume now that $\rho<1$ and that the system is in stationary state
$X\N=(x\N_r,r\in\R)$. For any $k\geq0$, the proportion of links in
state $k$ is
\[
 \alpha\N_k\egaldef\frac{1}{N}\sum_{\ell\in\L}\ind{X\N_\ell = k}.
\]

By symmetry, it holds that
\[
  \PP(X\N_\ell=k)=\EE\alpha\N_k,\ \forall\ell\in\L.
\]
The chaos propagation assumption\footnote{We have not proven that this
assumption holds. It seems, however, that the techniques developed
in~\cite{graham} could be applied to prove that this is the case,
provided the parameter $R\N$ goes to infinity with $N$.} implies
that $\alpha\N_k$ obeys a law of large numbers:
\[
 \alpha_k \egaldef \lim_{N\to\infty}\alpha\N_k 
 = \lim_{N\to\infty}\PP(X\N_\ell=k) \egaldef \PP(X=k).  
\]

It appears that the dynamics of the system are driven by $\alpha\N_k$,
traditionally referred to as the \emph{mean field}. The following
notation will also be useful:
\[
  \EX\N \egaldef \sum_{k>0}k\alpha\N_k,\qquad
  \EX   \egaldef \EE X_\ell = \sum_{k>0}k\alpha_k.
\]

In order to derive the equation satisfied by the limit stationary
distribution $\alpha_k$, we must first describe the possible
transitions for $\alpha\N_k$. The two cases of interest are
\begin{itemize}
\item arrival on a link with $k$ connections:
\[ 
 \alpha\N_k\to\alpha\N_k-\frac{1}{N},\quad 
 \alpha\N_{k+1}\to\alpha\N_{k+1}+\frac{1}{N}
\]
\item departure from a link with $k>0$ connections:
\[ 
 \alpha\N_k\to\alpha\N_k-\frac{1}{N},\quad 
 \alpha\N_{k-1}\to\alpha\N_{k-1}+\frac{1}{N}
\]
\end{itemize}

The transition corresponding to a new connection arrival has rate
$\lambda$. The main problem is to compute the departure rate from a
link $\ell$, given that it has $X\N_{\ell}=k$ ongoing transfers.
This can be written as
\begin{equation}\label{eq.outrate}
\frac{1}{\sigma}\sum_{r\ni \ell}\EE\Bigl[x\N_r
\min_{\ell'\in r} \frac{1}{X\N_{r(\ell')}} \Bigm| X\N_{\ell}=k\Bigr].
\end{equation}

Since the total number of routes is much larger than the size $N$ of
the network, we assume that the probability of having more than one
connection on a route $r$ is negligible\footnote{This fact is easy to
prove in finite time, but requires more work for the stationary
regime.}, and that the links on route $r$ are independent, conditioned
on $\{x\N_r=1\}$\footnote{This is the point where the heuristic is not
completely exact; it is however likely to be true when $\rho$ tends
either to $0$ or $1$.}. The first property allows to
rewrite~(\ref{eq.outrate}) as
\begin{equation}\label{eq.outrate2}
\frac{1}{\sigma}\sum_{r\ni \ell}\EE[x\N_r\mid X\N_\ell=k]
\EE\Bigl[\min_{\ell'\in r} \frac{1}{X\N_{r(\ell')}} \Bigm| x\N_r=1\Bigr].
\end{equation}

Let $j$ be a given link and let $r$ be one route using link~$j$, i.e.,
$r\ni j$. The distribution of $X\N_j$, conditioned on there being one
connection on $r$, is then
\[
 \PP\bigl[X\N_j=k'\bigm|x\N_r=1\bigr]
    = \frac{\EE\bigl[\ind{X\N_j=k'}\ind{x\N_r=1}\bigr]}{\EE\ind{x\N_r=1}}.
\]

By symmetry, it is possible to sum both sides of the above fraction
over all the routes going through link $j$,
\begin{eqnarray*}
 \PP\bigl[X\N_j=k'\bigm|x\N_r=1\bigr]
    &=& \frac{\EE\bigl[X\N_j\ind{X\N_j=k'}\bigr]}{\EE X\N_j}\\
    &=& \frac{k' \EE\alpha\N_{k'}}{\EE\EX\N}.
\end{eqnarray*}

Departure rate~(\ref{eq.outrate2}) then becomes, in view of the assumed
independence property between the $X_{r(i)}$ given $x_r=1$,
\[
\frac{k}{\sigma} \sum_{k_2,\ldots, k_L=1}^\infty 
          \biggl(\frac{1}{k}
	         \wedge\frac{1}{k_2}
                 \wedge\cdots\wedge \frac{1}{k_L}\biggr)
          \prod_{i=2}^L \frac{k_i\EE\alpha\N_{k_i}}{\EE\EX\N}.
\]

Taking the limit $N\to\infty$, the invariant measure equations follow.
We have:
\begin{equation}
 -\lambda \alpha_0 +\frac{1}{\sigma} \alpha_1 \frac{u_1}{\EX^{L-1}} = 0
                                          \label{invariante_mf0}
\end{equation}
and
\begin{equation}
 \lambda (\alpha_{k-1}-\alpha_k) 
  + \frac{1}{\sigma} \left(\alpha_{k+1} \frac{u_{k+1}}{\EX^{L-1}} 
         - \alpha_k \frac{u_k}{\EX^{L-1}}\right)  
	= 0\label{invariante_mf}
\end{equation}
for $k\geq 1$, where
\begin{equation}\label{eq.uk_L}
u_k \,\egaldef\, 
     k \sum_{k_2,\ldots k_L=1}^\infty 
          \biggl(\frac{1}{k}
	         \wedge\frac{1}{k_2}
                 \wedge\cdots\wedge \frac{1}{k_L}\biggr)
          \prod_{i=2}^L k_i\alpha_{k_i}.
\end{equation}

Equations~(\ref{invariante_mf0}) and~(\ref{invariante_mf})
can be rewritten in a more concise form as
\begin{equation}\label{invariante_EQ_L}
\alpha_{k+1} u_{k+1} = \rho \EX^{L-1}\alpha_k, \qquad \forall k\geq 0.
\end{equation}

The two sets of equations (\ref{eq.uk_L}) and (\ref{invariante_EQ_L})
together constitute the fixed point equations we require. As noted in
the introduction of this section, these equations do not depend on the
topology of the network. The expression for $u_k$ can be simplified.

Let $Y_2,\ldots,Y_L$ be random variables with
distribution
\[
  \PP(Y_i=y)=\frac{y\alpha_{y}}{\EX},\ 2\leq i\leq L,
\]
and let $Y \egaldef \max(Y_2,\ldots,Y_L)$.
Then~(\ref{eq.uk_L}) reads
\begin{equation}\label{eq.ukmin}
 u_k \,=\, k\EX^{L-1}\,\EE \Bigl[\frac{1}{k}\wedge \frac{1}{Y}\Bigr].
\end{equation}

Straightforward calculations yield
\begin{eqnarray*}
\EE \Bigl[\frac{1}{k}\wedge \frac{1}{Y}\Bigr]
 &=& \sum_{y=1}^\infty \Bigl[\frac{1}{k}\wedge \frac{1}{y}\Bigr]\PP(Y=y)\\
 &=& \sum_{y=k}^\infty \frac{1}{y(y+1)} 
          \PP(Y\leq y).
\end{eqnarray*}

The simplified form for $u_k$ is thus, from the basic properties of
the minimum of independent random variables:
\begin{equation}
\label{simpler}
 u_k \,=\, k\sum_{y=k}^{\infty}\frac{1}{y(y+1)}
                         \Bigl[\sum_{m=0}^{y} m\alpha_m\Bigr]^{L-1}.
\end{equation}

Note that, in the case $L=2$ (i.e., for the star-shaped network), the
original equation (\ref{eq.uk_L}) is perhaps simpler than the
equivalent expression (\ref{simpler}). It yields the following form
for the fixed point equations:
\begin{eqnarray}
\alpha_{k+1} u_{k+1} 
  &=& \rho \EX\alpha_k,\label{invariante_EQ}\\
u_k &=&\sum_{y>0}(k\wedge y)\alpha_{y}. \label{eq.uk}
\end{eqnarray}

\begin{remark}\label{rem.asym}\upshape
When considering the star-shaped network as a model for Web transfers
over the Internet, as suggested in Section~\ref{sec.sym}, inbound
links could be seen as the CPU of Web servers and outbound links as
the last hop between the ISP's backbone and the end customers. It thus
makes sense to relax the symmetry assumption we had made between
inbound and outbound links, as the two types of bottlenecks are of a
different nature. We might thus consider a star-shaped network with
$N\inb$ inbound links, $N\outb$ outbound links, inbound (resp.\
outbound) links having capacity $C\inb$ (resp.\ $C\outb$), see
Fig.~\ref{fig.etoile.asym}. Assume the mean message length $\sigma$ is
the same for each two-hop route, and the link capacities $C\outb$ are
fixed. The arrival rate on each route has the form
$\lambda_r\egaldef\lambda/N\inb$, and the load
$\rho\outb\egaldef\lambda\sigma/C\outb$ is less that $1$. The capacity
$C\inb$ of a ``backbone'' link is chosen to ensure
$\rho\inb\egaldef\lambda\sigma N\outb/C\inb N\inb<1$ is fixed when the
size of the system grows. Then, as $N\inb$ and $N\outb$ increase, with
$N\inb/N\outb$ small, the inbound links have many active connections
and a large capacity, while the outbound links remain in ``normal''
utilization. The same approach as above can then be applied, to yield
the set of fixed point equations
\begin{eqnarray*}
\alpha_{k+1}\inb u_{k+1}\outb 
  &=& \rho\inb \EX\outb\alpha_k\inb,\\
\alpha_{k+1}\outb u_{k+1}\inb 
  &=& \rho\inb \EX\outb\alpha_k\outb,
\end{eqnarray*}
where
\begin{eqnarray*}
u_k\inb 
  &=& \sum_{y>0}\alpha\inb_{y}\min\Bigl[k, \frac{C\outb}{C\inb}y\Bigr] ,\\
u_k\outb 
  &=& \sum_{y>0}\alpha\outb_{y}\min\Bigl[\frac{C\outb}{C\inb}k, y\Bigr] ,
\end{eqnarray*}
and $\alpha_k\inb$ (resp.\ $\alpha_k\outb$) represents the proportion
of inbound (resp.\ outbound) links with $k$ ongoing transfers.
\end{remark}
\begin{figure}
\begin{center}
\includegraphics[width=0.7\columnwidth]{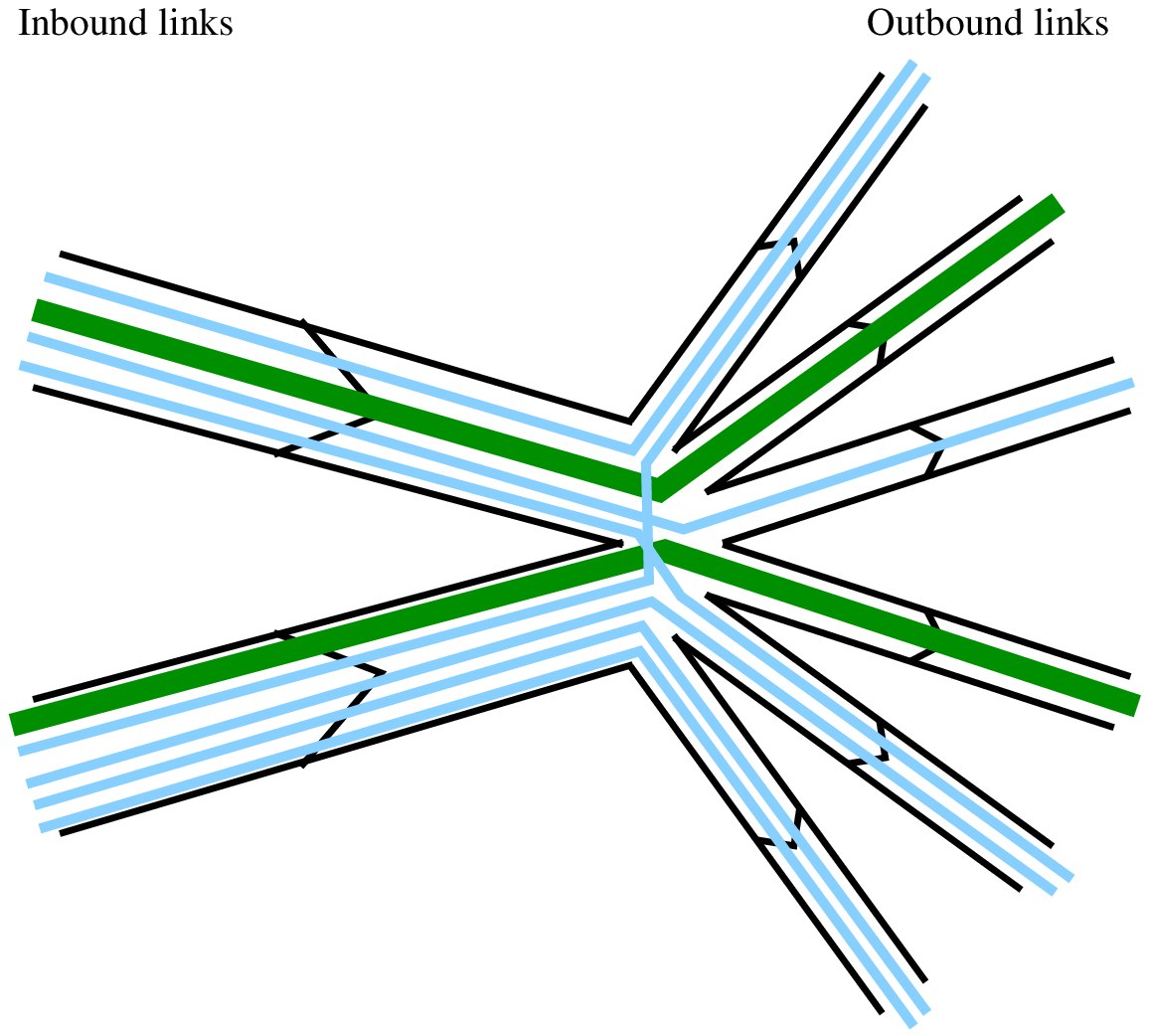}
\end{center}
\caption{The asymmetrical star-shaped network}
\label{fig.etoile.asym}
\end{figure}

\subsection{Analytical results for $L=2$}\label{sec.anal}

While equation~(\ref{invariante_EQ_L}) looks superficially like a
``birth and death process'' equation, it is in fact non-linear due to
the fact that $u_k$ and $\EX$ both depend on the $\alpha_y$.

 From~(\ref{eq.ukmin}), one clearly sees that $u_k$ is increasing in
$k$, and tends to $\EX^{L-1}$ when $k\to\infty$. Therefore, $\alpha_k$
is increasing as long as $u_k<\rho\EX^{L-1}$, and decreasing after
that. This means that the $\alpha_k$ form a modal distribution,
which maximal value is attained at
\[
 k_0 = \max\{k>0,\ u_k<\rho\EX^{L-1}\}.
\]

We now present analytical results on the solution of the fixed point
equations for $L=2$. The proof of these results can be found
in~\cite{FayLas:7}. It relies heavily on functional analysis.

\medskip
Equations (\ref{invariante_EQ})--(\ref{eq.uk}) have an unique solution
for $\rho<1$ and, when $\rho\to1$, the following asymptotic
expansions hold:
\begin{eqnarray*}
 \EE X&\approx&\frac{1}{(1-\rho)^2A},\\
 \lim_{k\to\infty}\rho^{-k}\PP(X=k) 
       &\approx& (1-\rho)B\exp\Bigl[\frac{1}{(1-\rho)A}\Bigr],
\end{eqnarray*}
where $A$ and $B$ are non-negative constants. 
\medskip

Thus, under the min policy, any link in the star-shaped network has a
mean queue length which is one order of magnitude larger than for a
single server queue with the same load ($\rho/(1-\rho)$). Its tail
distribution is still geometrical with factor $\rho$.

It is possible to give an expression for the constant $A$: if $c$ and
$v$ are solutions of the following system of differential equations,
\[
\begin{cases}
zc'(z) + c(z) v(z) =0,\\
zv''(z) + v'(z) = c(z),\\
v(0) = 0, \quad v'(0)=1,\quad c(0)=1,
\end{cases}
\]
then $A$ can be written as follows:
\[
 A=\int_0^\infty c(z)dz=\lim_{z\to\infty}zv'(z)\approx 1.30.
\]

Since this system is numerically highly unstable, it has proven
difficult (with the ``Livermore stiff ODE'' solver from MAPLE) to
derive a better estimate for~$A$.

It is interesting to note that the function $w(y)\egaldef v(e^y)+1$
satisfies the so-called \emph{Blasius}~\cite{Bla:1} equation
\[
 w'''(y)+w(y)w''(y)=0,
\]
which describes a laminar boundary layer along a flat plate (see,
e.g.,~\cite{Sch:1}).

\section{Numerical analysis and simulations}\label{sec.numer}

\begin{figure}
\begin{center}
\includegraphics[width=0.9\columnwidth]{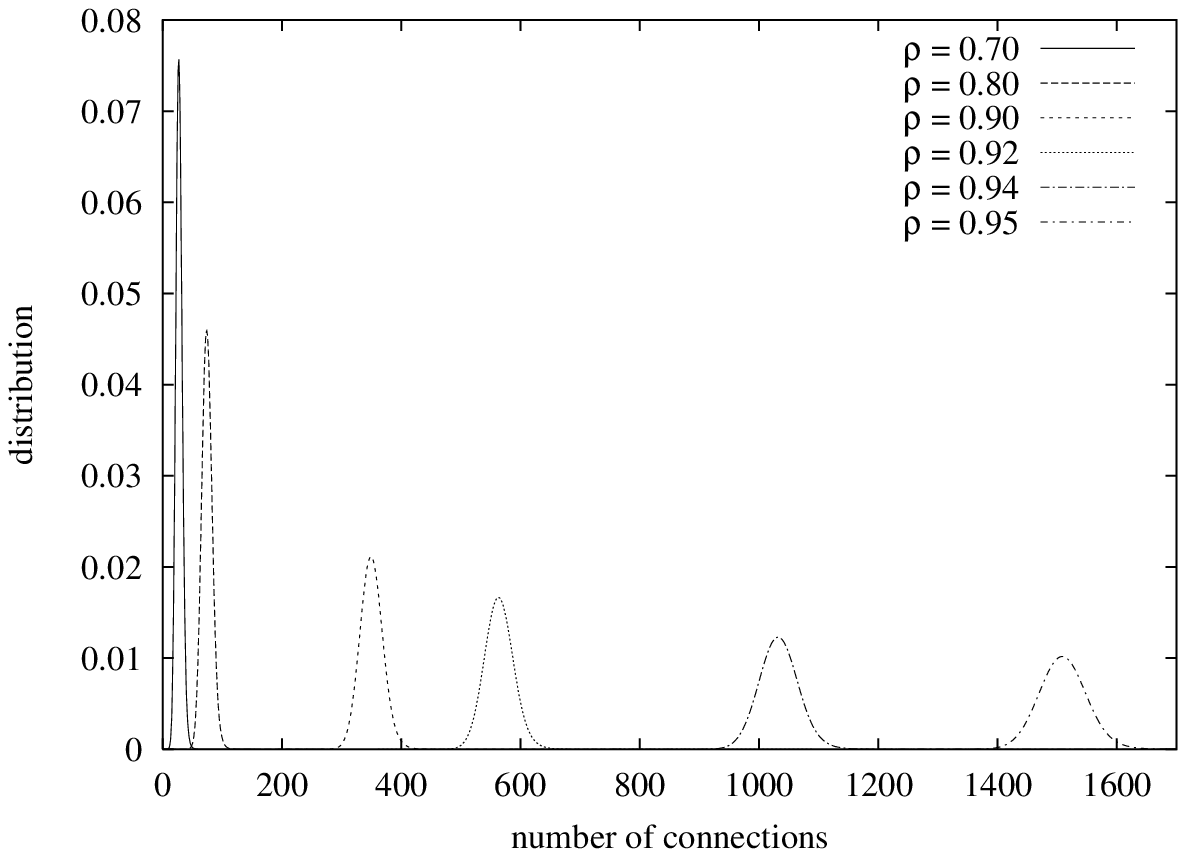}
\end{center}
\caption{Distribution of the mean-field probabilities $\PP(X=k)$ for
$L=20$ and different values of the load $\rho$.}
\label{fig.L=20}
\end{figure}
While the analytical results of Section~\ref{sec.anal} give some good
estimates, they are only valid in the heavy traffic regime $\rho\to
1$. In addition, similar results for $L>2$ are not available. We thus
resort to numerical resolution of the equations to gain a better
understanding of the performance of transfers across large symmetrical
networks. The very form of the equations suggests the use of a
fixed-point method for this numerical resolution: starting from
\emph{a priori} values $(\alpha_k^{\scriptscriptstyle(0)},k\geq0)$,
the algorithm computes the corresponding $u_k^{\scriptscriptstyle(0)}$
from~(\ref{eq.uk_L}), and then new values
$\alpha_k^{\scriptscriptstyle(1)}$ from~(\ref{invariante_EQ_L}).
Provided special care is taken to avoid instabilities, the iteration
of this process converges rapidly (less than $100$ steps). Sample results 
are shown in Fig.~\ref{fig.L=20}, corresponding to a large
symmetrical network with routes of length $L=20$.

As clearly seen in the figure, the distributions are very different
from what would be obtained for routes of length $L=1$. In this case,
the system consists of a collection of independent M/M/$1$ queues and
the associated distribution $\{\alpha_k\}$ is geometric. For $L>1$,
the $\{\alpha_k\}$ distributions are markedly modal and the positions
of the peak values are roughly proportional to $(1-\rho)^{-2}$, a fact
has only been proven in Section~\ref{sec.anal} for the case $L=2$.
Moreover, since the shape of the distribution is rather narrow, this
position roughly coincides with the mean number of active connection
(as can be seen from the raw data).
\begin{figure}
\begin{center}
\includegraphics[width=0.9\columnwidth]{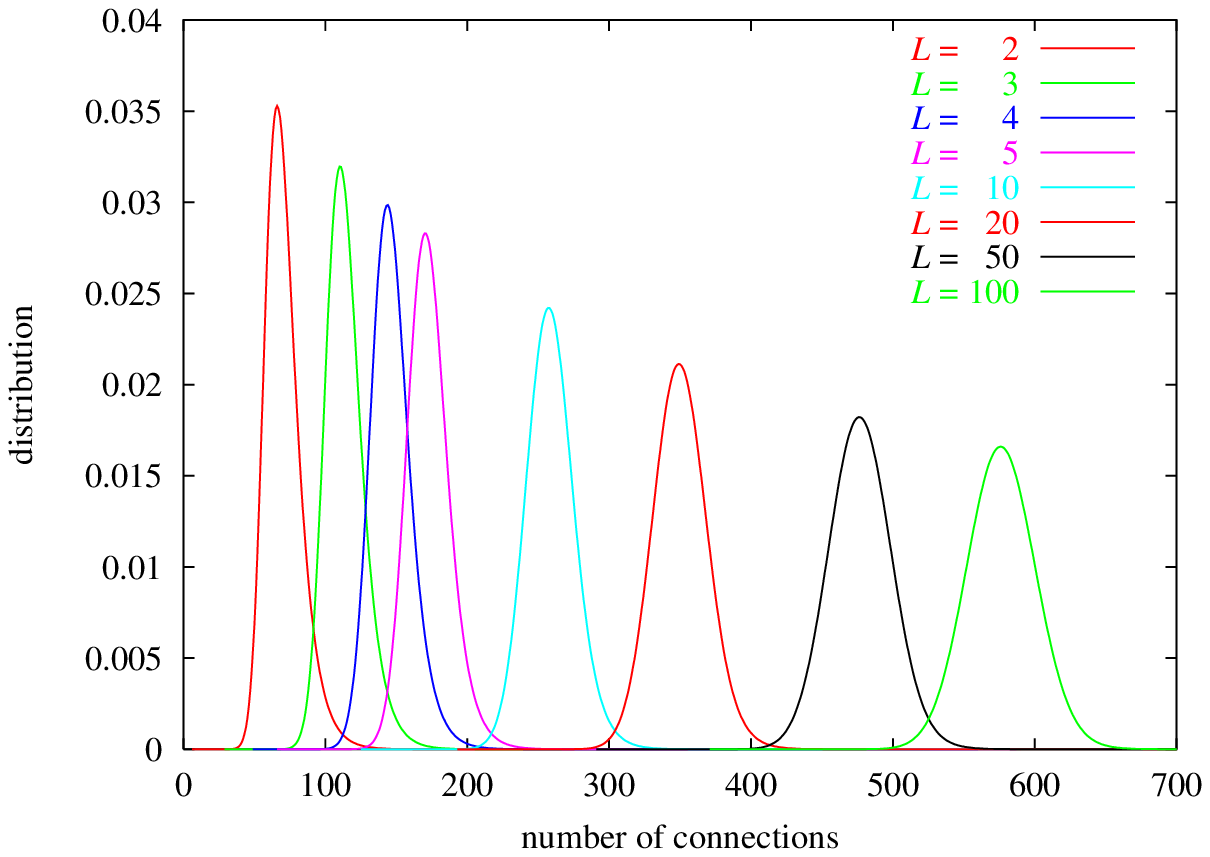}
\end{center}
\caption{Distribution of the mean-field probabilities $\PP(X=k)$ for
different values of the route length $L$ and load $\rho=0.9$.}
\label{fig.rho=.90}
\end{figure}

The impact of route length is illustrated in Fig.~\ref{fig.rho=.90}.
It seems that the mean number of active connections (which is
again approximately the peak value of the distribution) is roughly
proportional to $\log L$. Note that a logarithmic growth rate is very
slow suggesting that, beyond 2 or 3, the number of bottlenecks does not 
have a significant impact on mean transfer times. They depend much more 
on the load $\rho$. 

The results presented so far only concern the solution of the fixed
point equations. As mentioned earlier, there are gaps in the
derivation of these equations. To assess their quality and to
investigate the accuracy of the asymptotic approximation for finite
size networks, we ran a number of simulations of the star-shaped
network. Fig.~\ref{fig.simul.rho=.90} displays the corresponding
results when the load on each link is set to $\rho=0.9$ for a varying
number of links. The agreement between the simulation results and the
fixed point equation results is excellent for $N=100$ links and
improves as $N$ increases.
\begin{figure}
\begin{center}
\includegraphics[width=0.9\columnwidth]{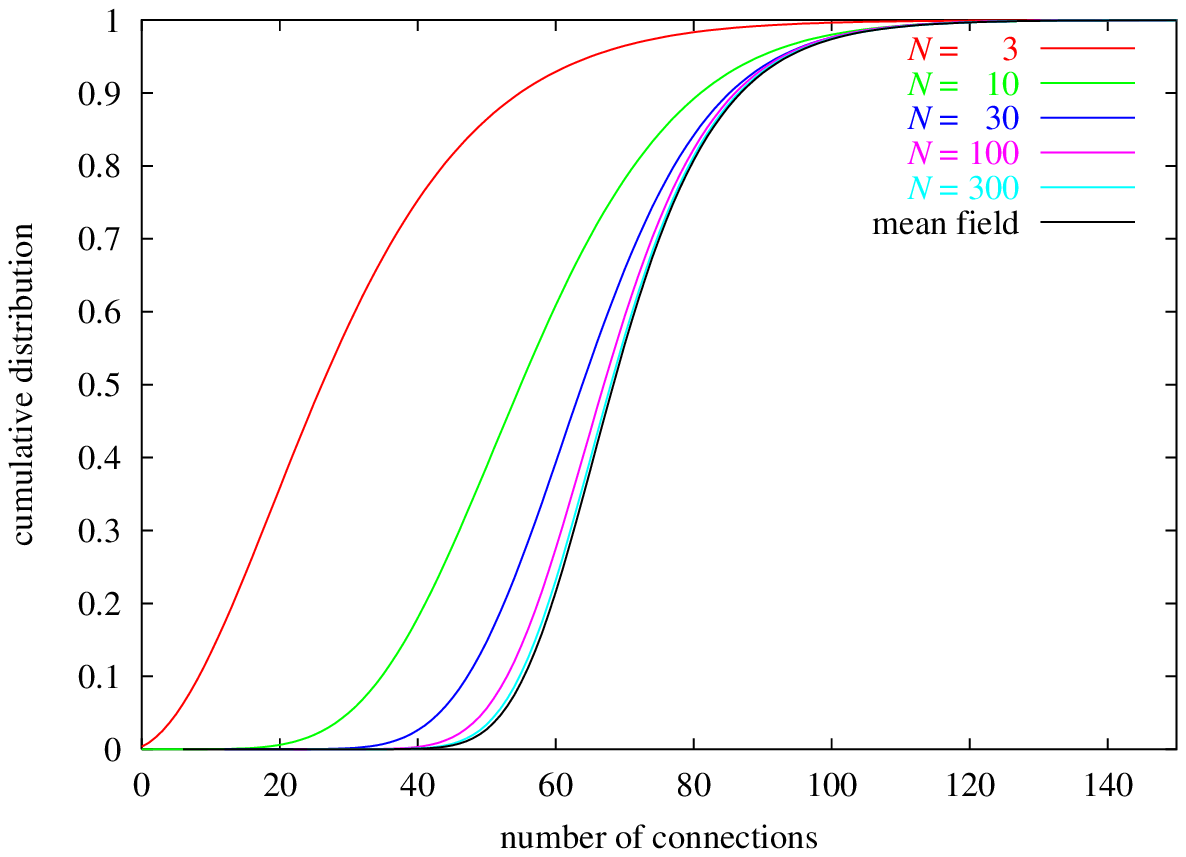}
\end{center}
\caption{Cumulative distribution $\PP(X\leq k)$ for
different values of $N$ (simulation) and infinite size (fixed point)
for a star-shaped network with load $\rho=0.9$.}
\label{fig.simul.rho=.90}
\end{figure}

\section{Conclusions} 

In this paper, we have considered a class of Markov processes called
best-effort networks which constitutes a natural probabilistic model
for evaluating the performance of document transfers over data
networks such as the Internet. Unlike almost all previous work, this
model accounts for the random nature of traffic: document transfers
begin at the epochs of a certain arrival process and the size of each
document is drawn from a given probability distribution. In the
interests of tractability we assumed Poisson arrivals and
exponentially distributed sizes. We introduced the ``min'' bandwidth
sharing policy as a conservative approximation to the more classical
max-min policy. Necessary and sufficient ergodicity conditions for
best effort networks under the min and max-min policies have been
established.

In order to pursue the analysis of the stationary distributions of the
number of transfers in progress, we have resorted to large network
asymptotics applying the mean field approach of statistical physics.
This enabled us to derive fixed point equations for the probability
distribution of the number of ongoing transfers on a given network
link. The validity of these equations has been established by
comparing their solution with the results of simulations.

Analytical and numerical results show how the mean transfer time
depends on the number of bottleneck links and their load. The steady
state distribution in networks where routes have several bottlenecks
($L>1$) has a marked modal behavior. This is significantly different
to the geometric distribution which holds when routes have a single
bottleneck ($L=1$). Performance is also much more sensitive to link
load $\rho$ for multiple bottleneck routes: as $\rho\to 1$, mean
transfer time increases like $1/(1-\rho)^2$ in the case $L=2$, whereas
the dependence is in $1/(1-\rho)$ when $L=1$. Finally, the impact of
the number of hops per route $L$ appears small (given that $L>1$)
compared to that of parameter $\rho$. This suggests that the
star-shaped network is perhaps a sufficiently complex model, and that
the study of symmetrical networks with $L>2$ is less relevant.

The work presented here can be pursued in several directions. On the
theoretical side, the analytical results presented in
Section~\ref{sec.anal} constitute a first step to understanding the
solution of the fixed point equations which could be taken further.
Another challenging theoretical question is to improve the fixed point
equations in a rigorous way. On a more practical side, the fixed point
equations might be simplified so as to find simple approximate
formulas for mean transfer times as a function of key parameters (such
as $\rho^{\mbox{\tiny{in}}}$, $\rho^{\mbox{\tiny{out}}}$ in the case
of the asymmetrical star-shaped network described in
Remark~\ref{rem.asym}). Such approximate formulas could then lead to
engineering rules for capacity planning.

We view the present study as a preliminary investigation into the
performance of best effort networks with multiple bottleneck links. A
significant result of this investigation is the discovery that the
extension of the processor sharing model valid for a single bottleneck
proves to be very hard. There appears to be no simple parallel to the
familiar fixed point techniques used in loss networks. The problem is,
however, of considerable practical importance for providers seeking to
engineer their network to ensure adequate throughput for document
transfers. We hope therefore that this paper will incite further work
and the development of alternative heuristic approaches.

\bibliography{min-infocom}
\bibliographystyle{IEEE}

\end{document}